\begin{document}

\title{Saturated Graphs of Prescribed Minimum Degree}
\date{}
\author{A. Nicholas Day \thanks{School of Mathematical Sciences, Queen Mary University of London, London E1 4NS, UK}}
\date{\today}
\maketitle 
\newtheorem{Thm}{Theorem}
\newtheorem{Lemma}[Thm]{Lemma}
\newtheorem{Def}[Thm]{Definition}
\newtheorem{Coro}[Thm]{Corollary}
\newtheorem{Conj}[Thm]{Conjecture}
\newtheorem{Prop}[Thm]{Proposition}
\newtheorem{Quest}[Thm]{Question}
\newtheorem{Claim}{Claim}
\newtheorem{Case}{Case}

\begin{abstract}
A graph $G$ is $H$-saturated if it contains no copy of $H$ as a subgraph but the addition of any new edge to $G$ creates a copy of $H$.  In this paper we are interested in the function sat$_{t}(n,p)$, defined to be the minimum number of edges that a $K_{p}$-saturated graph on $n$ vertices  can have if it has minimum degree at least $t$.  We prove that sat$_{t}(n,p) = tn - O(1)$, where the limit is taken as $n$ tends to infinity.  This confirms a conjecture of Bollob\'{a}s when $p = 3$.  We also present constructions for graphs that give new upper bounds for sat$_{t}(n,p)$.
\end{abstract}

\textit{2010} Mathematics subject classification:  	05C35

\section{Introduction}

We say a graph $G$ is $H$-saturated if it contains no copy of $H$ as a subgraph but the addition of any new edge to $G$ creates a copy of $H$.  In this paper we are interested in the case where $H$ is the complete graph on $p$ vertices, denoted $K_{p}$.  For further results  on saturated graphs see surveys by either Faudree, Faudree and Schmitt \cite{Faudree} or Pikhurko \cite{Oleg}.  Erd\H{o}s, Hajnal and Moon \cite{EHM} showed that if $G$ is a $K_{p}$-saturated graph on $n$ vertices then $e(G) \geqslant n(p-2) - \binom{p-1}{2}$ and that the unique graph achieving equality is formed by taking a clique on $p-2$ vertices and fully connecting it to an independent set of size $n - (p - 2)$.  This extremal graph has minimum degree $p-2$ and no $K_{p}$-saturated graph on at least $p$ vertices can have smaller minimum degree.  Thus it is natural to ask: how few edges can a $K_{p}$-saturated graph have if it has minimum degree at least $t$ for $t \geqslant p-2$?  

Observe that any $K_{3}$-saturated graph on $n$ vertices must be connected and so cannot have fewer than $n-1$ edges.  The graph comprising of a single vertex connected to all other vertices is $K_{3}$-saturated, has minimum degree $1$ and has this minimum number of edges.  Duffus and Hanson \cite{DuffusHanson} showed that any $K_{3}$-saturated graph on $n$ vertices with minimum degree $2$ has at least $2n-5$ edges.  Moreover, they showed that the unique graphs achieving this are obtained by taking a $5$-cycle and repeatedly duplicating vertices of degree $2$, that is, picking a vertex of degree $2$ and adding a new vertex to the graph with the same neighbourhood as the chosen vertex.  They also showed that any $K_{3}$-saturated graph on $n \geqslant 10$ vertices with minimum degree $3$ has at least $3n - 15$ edges and that any graph achieving this contains the Petersen graph as a subgraph.

In this paper we consider the function
\begin{equation}
\text{sat}_{t}(n,p) = \min \{ e(G): |V(G)| = n, G \text{ is } K_{p} \text{-saturated} , \delta(G) \geqslant t \},\nonumber
\end{equation}
where $\delta(G)$ is the minimum degree of $G$.  We also define the set Sat$_{t}(n,p)$ to be
\begin{equation}
 \{ G : |V(G)| = n, G \text{ is } K_{p} \text{-saturated} , \delta(G) \geqslant t, e(G) = \text{sat}_{t}(n,p) \}.\nonumber
\end{equation}
The complete bipartite graph $K_{t,n-t}$ shows that for $n \geqslant 2t$ we have sat$_{t}(n,3) \leqslant tn - t^{2}$.  This upper bound and Duffus and Hanson's results led Bollob\'{a}s \cite{Boll} to conjecture that for fixed $t$ we have sat$_{t}(n,3) = tn - O(1)$.

For more general values of $p$, Duffus and Hanson \cite{DuffusHanson} showed that sat$_{t}(n,p) \geqslant \frac{t + p -2}{2}n  - O(1)$.  Writing $\alpha(G)$ for the size of the largest independent set in $G$, Alon, Erd\H{o}s, Holzman and Krivelevich \cite{AEHK}  showed that any $K_{p}$-saturated graph on $n$ vertices with at most $O(n)$ edges has $\alpha(G) \geqslant n - O(\frac{n}{\log \log n})$.  This shows that sat$_{t}(n,p) \geqslant tn - O(\frac{n}{\log \log n})$ as $e(G) \geqslant \alpha(G)\delta(G)$.  Pikhurko \cite{Oleg} improved this result to show that sat$_{t}(n,p) \geqslant tn - O(\frac{n \log \log n}{\log n})$.

Our main result in this paper improves these results by confirming and generalising Bollob\'{a}s's conjecture. 

\begin{Thm}\label{Thm1}
Let $t \in \mathbb{N}$. There exists a constant $c = c(t)$ such that, for all $3 \leqslant p \in \mathbb{N}$ and all $n \in \mathbb{N}$, if $G$ is a $K_{p}$-saturated graph of order $n$ and minimum degree at least $t$ then $e(G) \geqslant tn - c$.
\end{Thm}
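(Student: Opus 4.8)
The plan is to induct on $p$, with a vertex of minimum degree as the central object, reducing the whole statement to a single inequality comparing the total ``excess degree'' of $G$ with the number of edges inside a certain induced subgraph. First some reductions. If $p\geqslant 2t+2$ there is essentially nothing to do: for $n\geqslant p$ any $K_p$-saturated graph has minimum degree at least $p-2\geqslant 2t$, so $e(G)\geqslant tn$, while for $n<p$ saturation forces $G=K_n$ and $e(G)=\binom{n}{2}\geqslant tn-O(1)$ by inspection. So I may assume $p\leqslant 2t+1$; then $p$ is bounded in terms of $t$, so every constant below that depends on $t$ and $p$ depends on $t$ alone. I may also assume $t\geqslant p-2$ (otherwise $\delta(G)\geqslant p-2>t$ and I replace $t$ by $p-2$) and $\delta(G)\leqslant 2t-1$ (otherwise $e(G)\geqslant\tfrac12\delta(G)n\geqslant tn$). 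Now I induct on $p$, the case $p=2$ being trivial. If $G$ has a universal vertex $u$, then $G-u$ is $K_{p-1}$-saturated with minimum degree at least $t-1$, so by induction $e(G-u)\geqslant(t-1)(n-1)-O(1)$ and hence $e(G)=(n-1)+e(G-u)\geqslant tn-O(1)$. So from now on $G$ has no universal vertex.

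Fix a vertex $v$ of minimum degree, and write $N:=N(v)$ and $R:=V(G)\setminus N[v]$, so $|N|=\delta(G)\leqslant 2t-1$. The key observation is that for every $(p-2)$-clique $Q$ of $G$ the common neighbourhood $N(Q):=\bigcap_{q\in Q}N(q)$ is an independent set, since two adjacent vertices of $N(Q)$ would, together with $Q$, span a $K_p$. As $G$ has no universal vertex, every vertex has a non-neighbour, so for each $w\in R$ the graph $G+vw$ contains a $K_p$, which forces $w$ to be completely joined to some $(p-2)$-clique inside $N$. Since $N$ contains at most $\binom{2t-1}{p-2}=O(1)$ cliques of size $p-2$, this yields a partition $R=B_1\sqcup\cdots\sqcup B_m$ with $m=O(1)$, in which each $B_j$ lies in the common neighbourhood of a fixed $(p-2)$-clique $Q_j\subseteq N$; in particular each $B_j$ is independent and is completely joined to $Q_j$ (and $G$ has bounded chromatic number).

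Next I rewrite the edge count. Since $\{v\}\cup N$ spans $O(1)$ edges, $v$ has no neighbour in $R$, $|R|=n-O(1)$, and $e(N,R)=\sum_{w\in R}\bigl(d(w)-|N(w)\cap R|\bigr)=\sum_{w\in R}d(w)-2\,e(G[R])$, I obtain
\begin{equation}
e(G)\;\geqslant\;e(N,R)+e(G[R])\;=\;\sum_{w\in R}d(w)-e(G[R])\;\geqslant\;tn+\sum_{w\in R}\bigl(d(w)-t\bigr)-e(G[R])-O(1).\nonumber
\end{equation}
So it suffices to prove the inequality $\sum_{w\in R}(d(w)-t)\geqslant e(G[R])-O(1)$: the total excess degree of the vertices of $R$ must pay, up to an additive constant, for all of the edges inside $R$. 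The naive charging of each edge of $G[R]$ half to each endpoint loses a factor $\tfrac12$ and fails, so one has to use saturation again: a vertex $w\in B_i$ with fewer than $t$ neighbours in $N$ has correspondingly many neighbours in $\bigcup_{j\neq i}B_j$, and each such neighbour $w'\in B_j$ is completely joined to the clique $Q_j$, which tightly constrains the local structure around the edge $ww'$. The hope is that pushing this --- re-running the neighbourhood argument of the previous paragraph inside the pieces $B_i$, or running a stability argument around the complete bipartite-type extremal graphs --- shows that outside a bounded set $G$ already looks like such an extremal graph, so that the edges of $G[R]$ are concentrated on a bounded collection of high-degree vertices whose excess degree absorbs them with only $O(1)$ loss.

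I expect this last step to be the main obstacle. Turning the $o(n)$ error in the bounds of Alon--Erd\H{o}s--Holzman--Krivelevich and of Pikhurko into an $O(1)$ error requires a sharp, not merely approximate, description of the near-extremal $K_p$-saturated graphs, together with a delicate amortised count of the edges inside $R$ against the excess degrees, rather than the crude bound $e(G)\geqslant t\,\alpha(G)$, which only gives $e(G)\geqslant tn-o(n)$; everything else in the argument above is routine.
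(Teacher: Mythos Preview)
Your setup is correct and natural, but the proof is not complete: the final inequality
\[
\sum_{w\in R}\bigl(d(w)-t\bigr)\;\geqslant\;e(G[R])-O(1)
\]
is, after unwinding, \emph{equivalent} to the statement $e(G)\geqslant tn-O(1)$ you are trying to prove (both say $e(N,R)+e(G[R])\geqslant t|R|-O(1)$), so what you call a reduction is only a rewriting.  The structural observations you make --- that $R$ splits into $O(1)$ independent blocks $B_j$, each completely joined to a $(p-2)$-clique $Q_j\subseteq N$ --- are true and useful, but they do not by themselves control $e(G[R])$: vertices of degree exactly $t$ can still have up to $t-(p-2)$ neighbours inside $R$, so a priori $e(G[R])$ could be linear in $n$ while the left side vanishes.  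Your suggestion to ``re-run the neighbourhood argument inside the pieces $B_i$'' is headed in the right direction, but you give no mechanism to guarantee that this recursion terminates after boundedly many steps with a bounded-size exceptional set; without that, the argument is circular.

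This termination mechanism is exactly the content of the paper's proof.  Rather than fixing $R=V\setminus N[v]$ once and for all, the paper grows a set $R$ iteratively: starting from a single vertex, one repeatedly enlarges $R$ by at most $t|R|^{t-1}$ vertices so that a carefully chosen potential function
\[
l(v)=d_R(v)+\tfrac12\,d_{\overline{R}\setminus R}(v)+\tfrac{1}{2t}\sum_{y\in N_Y(v)}d_R(y)
\]
increases by at least $\tfrac{1}{2t}$ on every remaining ``bad'' vertex.  Since $l(v)\leqslant w(v)<t$ for bad $v$, at most $2t^2$ iterations suffice, and the final $R$ has size bounded purely in terms of $t$.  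The saturation hypothesis enters exactly once per iteration, to produce a common neighbour $z$ of a bad vertex $y$ and a newly added vertex $x_i$, via the $K_p$ created by the missing edge $yx_i$ --- this is the analogue of your $(p-2)$-clique observation, but used dynamically rather than statically.  In short: your framework recovers the first iteration of the paper's argument; what is missing is the potential function that makes the iteration provably finite.
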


The proof of Theorem \ref{Thm1} is presented in Section 2.  To see that this result is best possible (up to the value of the constant) consider the graph obtained from fully connecting a clique of size $p-3$ to the complete bipartite graph $K_{t-(p-3),n-t}$.  This graph is $K_{p}$-saturated and has minimum degree $t$, showing that 
\begin{eqnarray}\label{eq1}
\text{sat}_{t}(n,p) \leqslant tn - t^{2} + t(p-3)- \binom{p-2}{2} 
\end{eqnarray}
for $n \geqslant 2t - (p-3)$ and $ t \geqslant p-2$.

We remark that though it may seem surprising that the constant $c(t)$ in the statement of Theorem \ref{Thm1} doesn't depend on $p$, it is a consequence of the fact that any $K_{p}$-saturated graph (on at least $p -1$ vertices) has minimum degree at least $p-2$.  As a result, Theorem \ref{Thm1} is trivially true whenever $p \geqslant 2t + 2$, and so, for fixed $t$, there are only a finite number of values of $p$ we need to consider.  The independence of $c(t)$ from $p$ is also reflected in our proof of Theorem \ref{Thm1} which only makes use of the fact that our graph is $K_{p}$-saturated for some $3 \leqslant p \in \mathbb{N}$ and doesn't make use of $p$'s value in any way.

On the other hand, Theorem \ref{Thm1} can be used to show the following:  for all $t,p \in \mathbb{N}$ with $ t \geqslant p-2 \geqslant 1$, there exists a constant $c(t,p)$ such that, for all sufficiently large $n \in \mathbb{N}$, we have sat$_{t}(n,p) = tn - c(t,p)$. Indeed, Theorem \ref{Thm1} together with (\ref{eq1}) shows that, for $n$ sufficiently large, all $G \in$ Sat$_{t}(n,p)$ have $\delta(G) = t$.  Duplicating a vertex of degree $t$ in such a graph $G$ gives a $K_{p}$-saturated graph on $n+1$ vertices with minimum degree $t$ and sat$_{t}(n,p) + t$ edges.  Thus, as $n$ increases, the integer sequence $tn - $sat$_{t}(n,p)$ becomes non-decreasing but bounded above by $c(t)$ and so is eventually constant.
  
The proof of Theorem \ref{Thm1} can be used to show that $c(t,p) \leqslant t^{(t^{(2t^{2})})}$.  In Section 3 we discuss constructing $K_{p}$-saturated graphs and prove a lower bound for $c(t,p)$.

\begin{Thm}\label{Thm2}
Let $3 \leqslant p \in \mathbb{N}$.  There exists a constant $C = C(p) > 0$ such that, for all sufficiently large $t \in \mathbb{N}$, we have $c(t,p) \geqslant C 2^{t}t^{3/2}$.
\end{Thm}

The large distance between these upper and lower bounds for $c(t,p)$ naturally leads to the problem of improving these bounds, or perhaps even determining $c(t,p)$ for all $t$ and $p$.  Our proof of Theorem \ref{Thm1} seems to be inefficient for the purposes of bounding $c(t,p)$ and so we believe $c(t,p)$ is likely to be closer to the lower bound we give in Theorem \ref{Thm2} than the upper bound obtained from Theorem \ref{Thm1}.

We remark that one may also ask how few edges a $K_{p}$-saturated graph can have if restrictions are placed on its maximum degree rather than its minimum degree.  Results on this problem for $p = 3$ can be found in the paper of F\"{u}redi and Seress \cite{FurSer} and also in the paper of Erd\H{o}s and Holzman \cite{EHol}.  Results for the case $p = 4$ can be found in the paper of Alon, Erd\H{o}s, Holzman and Krivelevich \cite{AEHK}.  There are currently no known results for $p \geqslant 5$.

\section{Proof of Theorem \ref{Thm1}}

For a graph $G$ and a vertex $v \in V(G)$, let $N(v)$ be the set of vertices in $G$ that are adjacent to $v$.  For $X \subseteq V(G)$ let $N_{X}(v) = N(v) \cap X$, let $d_{X}(v) = |N_{X}(v)|$ and let $e(X)$ be the number of edges in the graph $G[X]$.  For another set $Y \subseteq V(G)$ that is disjoint from $X$, let $N_{Y}(X)$ be the set of vertices in $Y$ adjacent to $X$ and let $e(X,Y)$ be the number of edges between $X$ and $Y$.

\begin{proof}[Proof of Theorem \ref{Thm1}]

Let $G$ be a $K_{p}$-saturated graph on vertex set $V$ with $|V| = n$ and $\delta(G) \geqslant t$.  Given a set $R \subseteq V$, let $\overline{R}$ be the closure of $R$ under $t$-neighbour bootstrap percolation on $G$.  That is, let $\overline{R} = \bigcup_{i \geqslant 0} R_{i}$ where $R_{0} = R$ and
\begin{equation}
R_{i} = R_{i-1} \cup \{ v \in V : d_{R_{i-1}}(v) \geqslant t \} \nonumber
\end{equation}
for $i \geqslant 1$.  Any vertex $x \in R_{i} \setminus R_{i-1}$ sends at least $t$ edges to $R_{i-1}$ and so $e(\overline{R}) \geqslant t(|\overline{R}| - |R|)$.  Let $Y(R) = V \setminus \overline{R}$ and for a vertex $v \in V$ let
\begin{equation}
w_{R}(v) = d_{\overline{R}}(v)  + \frac{1}{2}d_{Y(R)}(v). \nonumber
\end{equation}
We call $w_{R}(v)$ the weight of $v$ (with respect to $R$).  Within $Y(R)$, we define $B(R)$ to be the set $\{ v \in Y(R): w_{R}(v) < t \}$, which we call the set of bad vertices.  Our aim will be to prove the following claim.
\begin{Claim}\label{Claim1} 
There exists a constant $c_{1} = c_{1}(t)$ and a set $R \subseteq V$ with $|R| \leqslant c_{1}(t)$ such that $B(R) = \emptyset$.
\end{Claim}
If we can prove Claim \ref{Claim1} then we have proved the theorem as
\begin{eqnarray}
e(G) &=& e(\overline{R}) + e(\overline{R},Y(R)) + e(Y(R))  \nonumber \\
& \geqslant & t(|\overline{R}|-|R|)  + \sum_{y \in Y(R)} w_{R}(y) \nonumber \\
& \geqslant & t(|\overline{R}| - c_{1}) + t|Y(R)| \nonumber \\
& = & t(n - c_{1}),\nonumber
\end{eqnarray}
as required.
To prove Claim \ref{Claim1}, we would like to show that if a set $R \subseteq V$ does lead to $B(R)$ being non-empty then we can move a small number of vertices into $R$ so that the remaining vertices in $B(R)$ have strictly larger weight.  If so, we can start with some initial small set of vertices $R$ and keep moving small numbers of vertices into $R$ until $B(R)$ is empty.  This idea of moving vertices into $R$ fits naturally with our set up so far.  Indeed, suppose that $S$ is a set of vertices with $R \subseteq S$.  We have that $\overline{R} \subseteq \overline{S}$ and $Y(R) \supseteq Y(S)$ and so $w_{R}(v) \leqslant w_{S}(v)$ for all $v \in V$.  Thus, we have that $B(R) \supseteq B(S)$.

It turns out that dealing with $w_{R}(v)$ directly is difficult and so we introduce a control function $l_{R}(v) = \sum_{x \in N(v)}f_{R}(x)$ defined for all $v \in V$, where for all $x \in V$
\begin{equation}
  f_{R}(x)=\begin{cases}
    1, & \text{if } x \in R, \\
    \frac{1}{2}, & \text{if } x \in \overline{R}\setminus R, \\
    \frac{1}{2t}d_{R}(x), & \text{if } x \in Y(R). \nonumber
  \end{cases}
\end{equation}
Observe that $l_{R}(v) \leqslant w_{R}(v)$ for every $v \in V$, since $d_{R}(X) \leqslant t-1$ for every $x \in Y(R)$.  Similarly, we have $f_{R}(v) \leqslant f_{S}(v)$ for every $R \subseteq S$ and every $v \in V$, since $Y(S) \subseteq Y(R)$.
  
We use our control function $l_{R}(v)$ to make the following claim.

\begin{Claim}\label{Claim2}
For every set $R \subseteq V$, there exists a set $S \subseteq V$ such that $R \subseteq S $, $|S| \leqslant |R| + t2^{|R|}$  and $l_{S}(v) \geqslant l_{R}(v) + \frac{1}{2t}$ for all $v \in B(S)$.
\end{Claim}

We note that Claim \ref{Claim2} is enough to prove Claim \ref{Claim1} and hence our theorem.  Indeed, begin by taking $R = \{ v\}$ for any $v \in V$ and repeatedly replace $R$ with $S$.  After at most $2t^{2}$ such replacements, we will have that $B(R)$ is empty - any bad vertex $v \in B(R)$ would have $w_{R}(v) \geqslant l_{R}(v) \geqslant t$ which is not possible by the definition of $B(R)$.  Moreover, each time we replace $R$ with $S$ we have $|S| \leqslant |R| + t2^{|R|}$ and so our final set will have size bounded above by some function $c_{1}(t)$, as required.

We now describe how to find a suitable set $S$ given some set $R$.  Suppose that $B(R)$ is non-empty.  Let $\mathcal{C}$ be the set
\begin{eqnarray}
\{ C \subseteq R: C = N_{R}(y) \text{ for some } y \in B(R) \} \nonumber
\end{eqnarray}
and label its elements $\mathcal{C} = \{ C_{1}, \ldots, C_{k} \}$.  The set $\mathcal{C}$ is a collection of subsets of $R$ and so $k \leqslant 2^{|R|}$.  For each $C_{i} \in \mathcal{C}$ pick a representative $y_{i} \in B(R)$ such that $ C_{i} =  N_{R}(y_{i}) $.  As $y_{i} \in Y(R)$, we have that $d_{\overline{R}}(y_{i}) < t$ and so, as $d(y_{i}) \geqslant t$, we can pick some $x_{i} \in Y(R)$ such that $y_{i}$ and $x_{i}$ are adjacent. Let $X = \{x_{1}, \ldots, x_{k} \}$ and let
\begin{eqnarray}
S = R \cup X \cup N_{\overline{R}}(X). \nonumber
\end{eqnarray}
Clearly $R \subseteq S$.  Noting that $d_{\overline{R}}(x) \leqslant t-1$ for each $x \in X$, which holds as $X \subseteq Y(R)$, it follows that $|S| \leqslant |R| + tk \leqslant |R| + t2^{|R|}$.  It remains to check that $l_{S}(y) \geqslant l_{R}(y) + 1/2t$ for all $y \in B(S)$.  Recall that for each $v \in V$ we have $f_{S}(v) \geqslant f_{R}(v)$.  Thus, to show that $l_{S}(y) \geqslant l_{R}(y) + 1/2t$ for $y \in B(S)$ it is sufficient to find $v \in N(y)$ with $f_{S}(v) \geqslant f_{R}(v) + 1/2t$.

Given $y \in B(S)$ let $C_{i} \in \mathcal{C}$ be such that $N_{R}(y) = C_{i}$.  We have two cases to deal with depending on whether or not $y$ is adjacent to $x_{i}$.  If $y$ is not adjacent to $x_{i}$ then there are a few further sub cases to deal with.

\medskip
\noindent
\textbf{Case 1:}  $x_{i} \in N(y)$.
\smallskip

If $y$ is adjacent to $x_{i}$ then, as $x_{i} \in Y(R) \cap S$, we have $f_{R}(x_{i}) < 1/2$ while $f_{S}(x_{i}) = 1$ and so we are done.

\medskip
\noindent
\textbf{Case 2:}  $x_{i} \notin N(y)$.
\smallskip

If $y$ is not adjacent to $x_{i}$ then there exists some clique $Z \subseteq V$ of order $p-2$ such that adding an edge between $y$ and $x_{i}$ turns $Z \cup \{ x_{i}, y \}$ into a copy of $K_{p}$.  Recalling that $N_{R}(y)= N_{R}(y_{i})$, we note that $Z \nsubseteq R$ as otherwise $Z \cup \{ x_{i}, y_{i} \}$ would be an example of a copy of $K_{p}$ in $G$.  Thus there exists some $z \in Z \setminus R$ such that $z$ is adjacent to $x_{i}$ and $y$.  We conclude the proof by showing that $f_{S}(z) \geqslant f_{R}(z) + 1/2t$.

\medskip
\noindent
\textbf{Case 2a:}  $z \in \overline{R} \setminus R$.
\smallskip

If $z \in \overline{R} \setminus R$ then $z \in S$ (as it is adjacent to $x_{i}$) and so $f_{S}(z) = 1$ while $f_{R}(z) = 1/2$.

\medskip
\noindent
\textbf{Case 2b:} $z \in Y(R) \cap \overline{S}$.
\smallskip

If $z \in Y(R) \cap \overline{S}$ then $f_{S}(z) \geqslant 1/2$ while $f_{R}(z) \leqslant (t-1)/2t$, as $d_{R}(z) \leqslant t-1$. 

\medskip
\noindent
\textbf{Case 2c:}  $z \in Y(R) \cap Y(S)$.
\smallskip

If $z \in Y(R) \cap Y(S)$ then $f_{R}(z) = d_{R}(z)/2t$ and $f_{S}(z) = d_{S}(z)/2t$.  As $x_{i} \in Y(R) \cap S$ and $R \subseteq S$, we have that $d_{S}(z) \geqslant d_{R}(z) + 1$ and so $f_{S}(z) \geqslant f_{R}(z) + 1/2t$.

\medskip

In all cases, we have show than there is some $v \in N(y)$ with $f_{S}(v) \geqslant f_{R}(v) + 1/2t$.  As a result, we have that $l_{S}(y) \geqslant l_{R}(y) + 1/2t$ for all $y \in B(S)$.  This completes the proof of Claim \ref{Claim2} which in turn proves Claim \ref{Claim1} and hence our theorem.
\end{proof} 

As proved in the introduction, Theorem \ref{Thm1} can be used to show that there exists a constant $c(t,p)$ such that, for $n$ sufficiently large, we have sat$_{t}(n,p) = tn - c(t,p)$.  From a quantitative perspective, Theorem \ref{Thm1} gives an upper bound for $c(t,p)$ that is larger than a tower of exponentials of height $2t^{2}$.  This upper bound can be greatly improved by, in the proof of Theorem \ref{Thm1}, replacing $\mathcal{C}$ with its set of maximal elements (with respect to set inclusion).  Under this change, $\mathcal{C}$ becomes an antichain (meaning that if $A, B \in \mathcal{C}$ then $A \not\subseteq B$) whose elements have size at most $t-1$.  From this, the LYMB-inequality, due to Lubell \cite{LYM-L}, Yamamoto \cite{LYM-Y}, Meshalkin \cite{LYM-M} and Bollob\'{a}s \cite{LYM-B}, shows us that $|\mathcal{C}| \leqslant \binom{|R|}{t-1}$.  As a result, it is possible to prove
\begin{equation}\label{eq2}
c(t,p) \leqslant t^{(t^{(2t^{2})})}. 
\end{equation}

The nature of the proof of Theorem \ref{Thm1} leads us to believe that (\ref{eq2}) is not a good upper bound for $c(t,p)$. For example, the proof only used that $G$ is $K_{p}$-saturated for some $3 \leqslant p \in \mathbb{N}$ and didn't make any use of $p$'s actual value.  Moreover, in the proof of Claim \ref{Claim2} we only used the $K_{p}$-saturated condition on missing edges in $Y(R)$ rather than on all missing edges in $G$.  In Section 3 we construct graphs that give a lower bound for $c(t,p)$.  We believe this lower bound to be closer to the behaviour of $c(t,p)$ than the upper bound (\ref{eq2}) obtained from Theorem \ref{Thm1}.

\section{Constructing $K_{p}$-saturated graphs}
\begin{proof}[Proof of Theorem \ref{Thm2}]

Let $n, t \in \mathbb{N}$ with $t \geqslant4$ and $n \geqslant t \big( 1 +   \binom{t-1}{\lfloor t/2 \rfloor -1} \big)$.  We begin by constructing a graph $G(n,t)$ on $n$ vertices that is $K_{3}$-saturated and has minimum degree $t$.  Let $\mathcal{X} = \{ X \subseteq [t] : 1 \in X, |X| = \lfloor t/2 \rfloor \}$ and label its elements $\mathcal{X} = \{ X_{1}, \ldots, X_{r} \}$.  The vertices of $G(n,t)$ are split into vertex classes $C,H, V_{1},\ldots,V_{r}, W_{1},\ldots,W_{r},$ where 
\vspace{5mm}
\begin{itemize}

  \item $H = \{h_{1}, \ldots, h_{t} \}$,
  \item each $V_{i}$ has $\lfloor t/2 \rfloor$ vertices,
  \item each $W_{i}$ has $\lceil t/2 \rceil$ vertices,
  \item $C$ has the remaining $n - t \big( 1 +   \binom{t-1}{\lfloor t/2 \rfloor -1} \big)$ vertices.

\end{itemize}
The edges of $G(n,t)$ are as follows:
\begin{itemize}

  \item $C$ is fully connected to $H$,
  \item each $V_{i}$ is fully connected to the set $\{ h_{k} : k \in X_{i} \}$,
  \item each $W_{i}$ is fully connected to the set $\{ h_{k} : k \not\in X
  _{i} \}$,
  \item each $V_{i}$ is fully connected to $W_{i}$.
\end{itemize}
See Figure \ref{fig1} for an example of the construction when $t=4$. It is easy to verify that $G(n,t)$ has minimum degree $t$, is $K_{3}$-saturated and has $tn - f(t)$ edges, for some function $f(t) = \Omega(2^{t}t^{3/2})$.  We now use $G(n,t)$ to create $K_{p}$-saturated graphs for $p > 3$.

\begin{figure}
 \centering 
  \includegraphics[scale=0.60]{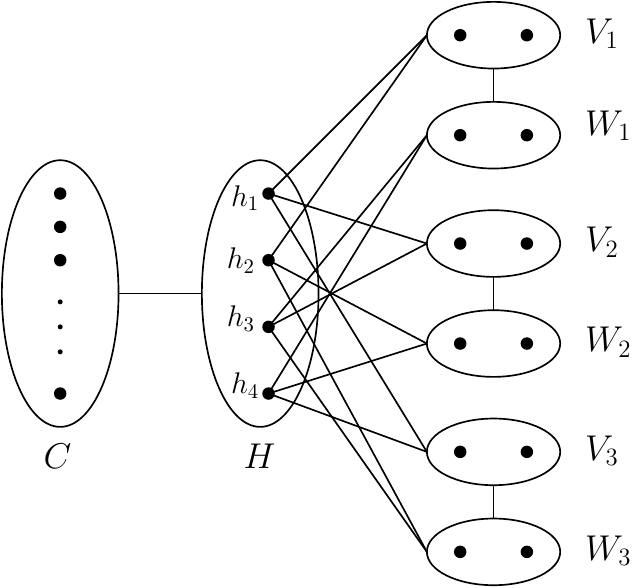}
  \caption{$G(n,4)$ where $X_{1} = \{1,2 \},X_{2} = \{1,3 \},X_{3} = \{1,4 \}.$}\label{fig1}  An edge between two sets (or between a vertex and a set) represents that the two sets (or vertex and set) are fully connected.
\end{figure}

Given a graph $G$, let $G^{*}$ be the graph obtained by adding a new vertex to $G$ and fully connecting it to all other vertices.  If $G$ is a $K_{p}-$saturated graph with minimum degree at least $t$, then $G^{*}$ is a $K_{p+1}-$saturated graph with minimum degree at least $t+1$.  Applying this construction $p-3$ times to the graph $G(n-p+3,t - p + 3)$ (where $t \geqslant p-2$ and $n$ is sufficiently large) gives a $K_{p}$-saturated graphs on $n$ vertices with minimum degree $t$ and fewer than $tn - f(t-(p-3))$ edges.  Thus, for fixed $p$, we have $c(t,p) = \Omega(2^{t}t^{3/2})$.
\end{proof}

The idea of forming a new graph $G^{*}$ from $G$ can also be considered in the other direction.  We say a vertex in a graph is a \textit{conical vertex} if it is connected to all other vertices. Suppose $G$ is a $K_{p}-$saturated graph with minimum degree $t$.  If $G$ has a conical vertex then removing this vertex leaves a $K_{p-1}-$saturated graph with minimum degree $t-1$.  Hajnal \cite{Haj} showed that if $G$ is a $K_{p}$-saturated graph without a conical vertex then $\delta(G) \geqslant 2(p-2)$.  Recall that a consequence of Theorem \ref{Thm1} is that, for $n$ sufficiently large, if $G \in$ Sat$_{t}(n,p)$ then $\delta(G) = t$.  Thus, if $t < 2(p-2)$, these graphs must have a conical vertex and so are of the form $G^{*}$ for some $G \in$ Sat$_{t-1}(n-1,p-1)$.  This leads us to the question:

\begin{Quest}
For which $n, t, p \in \mathbb{N}$ are all graphs in Sat$_{t}(n,p)$ of the form $G^{*}$ for some  $G \in$ Sat$_{(t-1)}(n-1,p-1)$?
\end{Quest}

We remark that there do exist values of $n,t$ and $p$ where Sat$_{t}(n,p)$ contains graphs without a conical vertex.  For example, Sat$_{4}(6,4)$ consists of only the complete tripartite graph $K_{2,2,2}$.  On the other hand Alon, Erd\H{o}s, Holzman and Krivelevich \cite{AEHK} showed that sat$_{4}(n,4) =  4n - 19$ for $n \geqslant 11$, and that  all graphs achieving equality have a conical vertex.  Perhaps it is the case that, for all fixed $t$, all fixed $p \geqslant 4$ and all $n$ sufficiently large, all graphs in Sat$_{t}(n,p)$ have a conical vertex.

\section{Acknowledgements}

I would like to thank Robert Johnson for his valuable advice and for always pointing me in the right direction. I would also like to thank the anonymous referee for their many helpful suggestions on how to improve this paper.  This research was funded by an EPSRC doctoral studentship.

\end{document}